\newcommand{\field}[1]{\mathbb{#1}}
\newcommand{\Z}{\field{Z}}
\newcommand{\R}{\field{R}}
\newcommand{\Gbar}[1]{\overline{G}_{#1}}
\newcommand{\pbar}[1]{\overline{p}_{#1}}
\newcommand{\op}{\overline{p}}
\def\({\left(}
\def\){\right)}
\newcommand{\df}{\emph}
\DeclareMathOperator{\Li}{Li}
\renewcommand{\o}{\overline}
\theoremstyle{plain}
\newtheorem{theorem}{Theorem}
\newtheorem*{theorem*}{Theorem}
\newtheorem{proposition}[theorem]{\textbf{Proposition}}
\newtheorem*{conjecture*}{Conjecture}
\theoremstyle{definition}
\theoremstyle{remark}
\newtheorem*{example}{Example}
\newtheorem*{remark}{Remark}
\newtheorem*{remarks}{Remarks}
\numberwithin{theorem}{section} \numberwithin{equation}{section}
\begin{document}

\title{$k$-run overpartitions and mock theta functions}
\author{Kathrin Bringmann}
\address{Mathematical Institute\\University of
Cologne\\ Weyertal 86-90 \\ 50931 Cologne \\Germany}
\email{kbringma@math.uni-koeln.de}
\author{Alexander E. Holroyd}
\address{Microsoft Research \\ 1 Microsoft Way \\ Redmond, WA 98052, USA}
\email{holroyd at microsoft.com}
\author{Karl Mahlburg}
\address{Department of Mathematics \\
Princeton University \\
NJ 08544\\ U.S.A.}
\email{mahlburg@math.princeton.edu}
\author{Masha Vlasenko}
\email{masha.vlasenko@gmail.com}
\date{\today}
\thanks{The research of the first author was supported by the Alfried Krupp Prize for
Young University Teachers of the Krupp Foundation.
The third author was supported by an NSF Postdoctoral Fellowship
administered by the Mathematical Sciences Research Institute through
its core grant DMS-0441170.  The first and third authors were also partially supported by the Alexander von Humboldt Foundation}

\begin{abstract}
In this paper we introduce $k$-run overpartitions as natural analogs to partitions without $k$-sequences, which were first defined and studied by Holroyd, Liggett, and Romik.  Following their work as well as that of Andrews,
we prove a number of results for $k$-run overpartitions, beginning with a double summation $q$-hypergeometric series representation for the generating functions.  In the
special case of $1$-run overpartitions we further relate the generating function to one of Ramanujan's mock theta functions.  Finally, we describe the relationship between $k$-run overpartitions and certain sequences of random events, and use probabilistic estimates in order to determine the asymptotic growth behavior of the number of $k$-run overpartitions of size $n$.
\end{abstract}

\maketitle

\section{Introduction and statement of results}

A {\it partition} of a positive integer $n$ is a non-increasing sequence of
positive integers whose sum is $n$; the number of distinct partitions of $n$
is traditionally denoted by $p(n)$.  A {\it sequence} (or {\it run}) in a
partition is any subsequence of consecutive integers that appear as parts.
Integer partitions without sequences were first studied by MacMahon in
\cite{Mac16}.  He described their relationship to partitions with repeated
parts, and also determined their generating series. These partitions were
studied more recently by Holroyd, Liggett, and Romik in \cite{HLR}, where the
authors introduced the general family of {\it partitions without
$k$-sequences} for any $k \geq 2$, in which no $k$ consecutive integers may
all appear as parts. The number of partitions of $n$ without $k$-sequences of
$n$ is denoted by $p_k(n)$, and the generating function is defined as
$$
G_k(q) := \sum_{n \geq 0} p_k(n) q^n.
$$
These partitions were also studied by Andrews \cite{And05}, who found a
(double) $q$-hypergeometric series expansion for the generating function.
Before giving this series, we record the definition for the $q$-Pochhammer
symbol, which is given by $(a;q)_n := \prod_{j = 0}^{n-1} (1- aq^j).$

Andrews showed that
\begin{equation}
\label{E:Gkdouble}
G_k(q) = \sum_{r, s \geq 0} \frac{(-1)^r\,
q^{\textstyle \frac{(k+1)k(r+s)^2}{2} + \frac{(k+1)(s+1)s}{2}}}
{\left(q^k; q^k\right)_r \left(q^{k+1}; q^{k+1}\right)_s}.
\end{equation}

Both of these earlier papers also addressed the asymptotic
behavior of $p_k(n)$ as $n \to \infty$.  Holroyd, Liggett, and
Romik \cite{HLR} showed the asymptotic expression
\begin{equation}
\label{E:logpk}
\log{p_k(n)} \sim \pi \sqrt{\frac{2}{3}\left(1 - \frac{2}{k(k+1)}\right)n}
\quad\text{as }n\to\infty.
\end{equation}
Andrews \cite{And05} substantially improved this result in the
case $k=2$, proving the asymptotic expression
\begin{equation}
\label{E:p2}
p_2(n) \sim \frac{1}{4 \sqrt{3} n^{\frac34}} e^{\frac{2 \pi}{3}\sqrt{n}}
\quad\text{as }n\to\infty.
\end{equation}
His proof relies on a beautiful and surprising
relationship between $G_2(q)$ and one of Ramanujan's famous
mock theta functions; we will further discuss this connection below.
\begin{remark}
The above asymptotic expression is not stated as such in Andrews' paper. He
instead focused on the asymptotics of the generating series $G_2(q)$ as $q\to
1$.  However, his results directly imply \eqref{E:p2} upon applying a
Tauberian theorem (cf. Section \ref{S:OverpartnAsymp} of this paper).
Recently \eqref{E:p2} was greatly refined by the first and third authors of
the present paper in \cite{BM}.  Specifically, the latter paper introduced a
generalization of the Circle Method in order to prove a series expansion for
$p_2(n)$ involving Kloosterman sums, Bessel functions, and principal value
integrals of modified Bessel functions.
\end{remark}

In this paper we study a related family of overpartitions.
As introduced by Corteel and Lovejoy in \cite{CL}, an
\df{overpartition} is a partition in which the last occurrence
of each part may be overlined.  The number of overpartitions of
size $n$ is denoted by $\op(n)$.  An overpartition is said to
have a \df{gap} at $m$ if there are no parts of size $m$.

We define \df{lower $k$-run overpartitions} to be those
overpartitions in which any overlined part must occur within a
run of exactly $k$ consecutive overlined parts that terminates
below with a gap.  More precisely, this means that if some part
$\overline{m}$ is overlined, then there is an integer $j$ with
$m\in[j+1,j+k]$ such that each of the $k$ overlined parts
$\o{j+1}, \o{j+2}, \ldots, \o{j+k}$ appear (perhaps together
with non-overlined versions), while no part $j$ (overlined or
otherwise) appears, and no overlined part $\o{j+k+1}$ appears.
There is a simple bijection between lower $k$-run
overpartitions and \df{upper $k$-run overpartitions}, which are
defined analogously but with the conditions on parts $j$ and
$j+k+1$ reversed (see Section \ref{S:Combinatorial}). Denote
the number of lower $k$-run overpartitions of size $n$ by
$\pbar{k}(n)$.

\begin{example}
The lower $2$-run overpartitions of size $7$ are
\begin{gather*}
\o{4}+\o{3},\quad 4+\o{2}+\o{1},\quad \o{3}+2+\o{2},\quad
3+\o{2}+1+\o{1}, \\ 2+2+\o{2}+\o{1},\quad
2+\o{2}+1+1+\o{1},\quad \o{2}+1+1+1+1+\o{1},
\end{gather*}
together with the $15$ partitions of $7$, so $\o{p}_2(7)=7+15 =22$.
\end{example}

The generating function for lower $k$-run
overpartitions is denoted by
$$
\Gbar{k}(q) := \sum_{n \geq 0} \o{p}_k(n) q^n.
$$
Our first result is analogous to Andrews' double-series
generating function \eqref{E:Gkdouble} for partitions without
$k$-sequences.
\begin{theorem}
\label{T:Gbarq}
For $|q| < 1$,
\begin{equation*}
\Gbar{k}(q) = \frac{1}{(q;q)_\infty}
\sum_{r, s \geq 0} \frac{(-1)^s \,
q^{\textstyle \frac{(k+1)k(r+s)^2}{2} + \frac{(k+1)s(s+1)}{2}}}
{(q^{k};q^{k})_r (q^{k+1}; q^{k+1})_s}.
\end{equation*}
\end{theorem}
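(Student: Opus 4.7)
The plan is to combine a combinatorial decomposition of lower $k$-run overpartitions with a $q$-series manipulation paralleling Andrews' derivation of \eqref{E:Gkdouble}.

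First I would decompose each lower $k$-run overpartition into two independent pieces: (a) its overlined parts, which by definition form a disjoint union of $m$ runs of $k$ consecutive overlined integers $[b_i+1, b_i+k]$ indexed by $b_1 < \cdots < b_m$ with $b_1 \geq 0$ and $b_{i+1} - b_i \geq k+1$; and (b) its non-overlined parts, which form an arbitrary ordinary partition avoiding the set $\{b_i : b_i \geq 1\}$. Combining $1/(q;q)_\infty$ for unrestricted ordinary partitions with the sieve $\prod_{i:b_i \geq 1}(1 - q^{b_i})$ that enforces the avoidance gives
$$
\Gbar{k}(q) = \frac{1}{(q;q)_\infty} \sum_{m \geq 0}\ \sum_{\substack{b_1 \geq 0 \\ b_{i+1} - b_i \geq k+1}} q^{k\sum_i b_i + m k(k+1)/2} \prod_{i:b_i \geq 1}(1 - q^{b_i}),
$$
with the exponent recording the total weight $\sum_i \sum_{j=1}^k(b_i + j)$ of the overlined parts.

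Next I would reparametrize via $c_i := b_i - (i-1)(k+1) \geq 0$, which converts the interleaved gap condition into the single condition $0 \leq c_1 \leq \cdots \leq c_m$ and reduces the overlined exponent to $k\sum c_i + k(k+1)m^2/2$. Expanding $\prod_{i:b_i \geq 1}(1 - q^{b_i}) = \sum_T (-1)^{|T|} q^{\sum_{i \in T} b_i}$ (where $T$ ranges over subsets of $\{i : b_i \geq 1\}$), substituting $b_i = c_i + (i-1)(k+1)$, and grouping by $s := |T|$ and $r := m - s$, I aim to show that the inner $(c, T)$-sum with $|T| = s$ collapses to
$$
\frac{(-1)^s q^{(k+1)s(s+1)/2}}{(q^k;q^k)_{m-s}(q^{k+1};q^{k+1})_s},
$$
with the $s$ chosen indices contributing a strict partition into $s$ positive multiples of $k+1$ (producing $q^{(k+1)s(s+1)/2}/(q^{k+1};q^{k+1})_s$), and the remaining $r$ indices together with the weakly increasing $c$-data contributing a partition with at most $r$ parts each a multiple of $k$ (producing $1/(q^k;q^k)_r$). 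Summing over $m = r + s$ then produces the desired double series.

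The principal obstacle is this last collapse, since the inner $c$-sum for fixed $T$ depends on the actual positions of the $T$-elements in $\{1, \ldots, m\}$ and not just on $|T|$: after substituting successive differences $d_i := c_i - c_{i-1}$, the resulting geometric weights $W_j = k(m - j + 1) + |T \cap \{j, \ldots, m\}|$ depend on these positions. I would address this either via an explicit bijection separating the combined $c$-sequence into two independent subsequences --- of lengths $r$ and $s$, weighted $q^k$ and $q^{k+1}$ respectively --- with the $(k+1)(i-1)$ shifts absorbed cleanly into the strict-partition generating function, or via a $q$-binomial telescoping that cancels the positional dependence. The boundary case $b_1 = 0$ (where the sieve product is truncated) must be tracked separately throughout, but does not obstruct the argument once the main telescoping is in place.
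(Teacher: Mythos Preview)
Your combinatorial decomposition is correct and leads to the right intermediate identity: for each fixed $m$, one needs
\[
\sum_{\substack{0 \leq c_1 \leq \cdots \leq c_m \\ T \subseteq \{i : b_i \geq 1\}}} (-1)^{|T|}\, q^{k\sum_i c_i + \sum_{i \in T} b_i}
\;=\;
\sum_{s=0}^m \frac{(-1)^s\, q^{(k+1)s(s+1)/2}}{(q^k;q^k)_{m-s}\,(q^{k+1};q^{k+1})_s},
\]
and this identity does hold (one can check it for small $m$). The gap is that you have not actually proved it, and neither of your two suggested routes is close to a proof. The bijective sketch --- splitting the $c$-sequence into independent length-$r$ and length-$s$ pieces with the $(i-1)(k+1)$ shifts ``absorbed cleanly'' --- does not work as stated: for fixed $|T|=s$ the inner sum genuinely depends on \emph{which} positions lie in $T$, not just on $s$. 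For example, at $m=2$, $s=1$, the choices $T=\{1\}$ and $T=\{2\}$ give $-q^{2k+1}/\bigl((1-q^k)(1-q^{2k+1})\bigr)$ and $-q^{k+1}/\bigl((1-q^{k+1})(1-q^{2k+1})\bigr)$ respectively; only their \emph{sum} equals $-q^{k+1}/\bigl((1-q^k)(1-q^{k+1})\bigr)$. So any bijection must mix the $T$-classes, and you have not indicated how. The ``$q$-binomial telescoping'' alternative is left entirely unspecified. The boundary case $c_1=0$ is also more entangled with this than your last sentence suggests.

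The paper takes a completely different and much shorter route that bypasses this obstacle. It introduces the two-variable refinement $\Gbar{k}(x;q)$ tracking the number of parts, derives (from the combinatorics of \emph{upper} $k$-run overpartitions, conditioning on the smallest part) the $q$-difference equation
\[
\Gbar{k}(x) = \frac{1}{1-xq}\,\Gbar{k}(xq) + \frac{x^k q^{k(k+1)/2}}{(xq;q)_k}\,\Gbar{k}\bigl(xq^{k+1}\bigr),
\]
renormalizes via $\overline{L}_k(x) := (xq;q)_\infty \Gbar{k}(x)$ to obtain a linear recurrence for the $x$-coefficients, and then simply verifies that the proposed double series satisfies the same recurrence with matching initial terms. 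Your sieve approach is more explicitly combinatorial and would, if the collapse were established, give a direct interpretation of the $(r,s)$-summand; but as written it trades the paper's routine verification for an unresolved $q$-series identity.
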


Our next result is an asymptotic expression for lower $k$-run overpartitions
that is much stronger than the logarithmic expression in \eqref{E:logpk}.
\begin{theorem}
\label{T:pk}
As $n \rightarrow \infty$,
$$
\pbar{k}(n)\sim \frac{1}{2\sqrt{6}n} \sqrt{1 + \frac{1}{2k(k+1)}} \exp\left(\pi \sqrt{\frac{2}{3} \left(1 + \frac{1}{2k(k+1)}\right)n} \right).
$$
\end{theorem}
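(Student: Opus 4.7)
\smallskip
\noindent\textbf{Proof plan.} The plan is to apply Ingham's Tauberian theorem: for a power series $\sum a_n q^n$ with non-negative, eventually non-decreasing coefficients whose generating function satisfies $A(e^{-t})\sim\lambda\,t^{-\alpha}e^{\beta/t}$ as $t\to 0^+$, Ingham's theorem yields
\[
a_n\sim \frac{\lambda\,\beta^{\alpha/2-1/4}}{2\sqrt{\pi}\;n^{\alpha/2+1/4}}\exp\bigl(2\sqrt{\beta n}\bigr).
\]
Matching the claimed asymptotic of $\pbar{k}(n)$ against this template pins down the target parameters $\alpha=3/2$, $\beta=\tfrac{\pi^2}{6}\bigl(1+\tfrac{1}{2k(k+1)}\bigr)$, and $\lambda=1/\sqrt{\pi}$. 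Weak monotonicity of $\pbar{k}(n)$ follows from the combinatorial description (e.g., appending a non-overlined $1$ defines an injection from lower $k$-run overpartitions of $n$ to those of $n+1$), so the problem reduces to proving
\[
\Gbar{k}(e^{-t}) \sim \frac{1}{\sqrt{\pi}}\,t^{-3/2}\exp\!\left(\frac{\pi^2}{6t}\Bigl(1+\tfrac{1}{2k(k+1)}\Bigr)\right)\quad\text{as }t\to 0^+.
\]

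By Theorem~\ref{T:Gbarq}, factor $\Gbar{k}(q) = H_k(q)/(q;q)_\infty$, where $H_k$ denotes the double $q$-hypergeometric sum appearing there. The classical $\eta$-transformation supplies $(q;q)_\infty^{-1}\sim\sqrt{t/(2\pi)}\,\exp(\pi^2/(6t))$, and the combined exponent $\pi^2/(6t)+\pi^2/(12k(k+1)t)=\beta/t$ and prefactor product $\sqrt{t/(2\pi)}\cdot\sqrt{2}\cdot t^{-2}=t^{-3/2}/\sqrt{\pi}$ reduce the task to proving
\[
H_k(e^{-t}) \sim \sqrt{2}\,t^{-2}\exp\!\left(\frac{\pi^2}{12k(k+1)\,t}\right).
\]

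For this, I plan a saddle-point/Laplace analysis of the double sum. Scaling $r=R/t$, $s=S/t$ and invoking the Euler--Maclaurin/dilogarithm estimate
\[
\log\frac{1}{(q^a;q^a)_{X/t}} \sim \frac{1}{a\,t}\!\left(\frac{\pi^2}{6}-\Li_2(e^{-aX})\right)\qquad(a\in\{k,k+1\}),
\]
the summand becomes $(-1)^s\exp\bigl(F(R,S)/t+\text{l.o.t.}\bigr)$ with an explicit dilogarithmic $F$. Its critical equations admit a unique positive solution $(R^*,S^*)$, at which one verifies $F(R^*,S^*)=\pi^2/(12k(k+1))$; Gaussian integration of the Hessian there, together with the Jacobian $t^{-2}$, produces the leading constant $\sqrt{2}$.

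The principal obstacle is the alternating sign $(-1)^s$, which blocks a direct real-variable saddle argument. My proposed work-around is to perform the $s$-sum first: completing the square in $s$ exposes a partial theta-type series that one attempts to collapse via the Euler identity
\[
\sum_{s\geq 0}\frac{(-z)^s q^{(k+1)s(s-1)/2}}{(q^{k+1};q^{k+1})_s}=(z;q^{k+1})_\infty.
\]
The mismatch between the base $q^{k+1}$ of the Pochhammer factor and the coefficient $(k+1)^2/2$ of the $s^2$-term in the exponent is the delicate point, and reconciling it may require a Bailey- or Heine-type transformation. Once $(-1)^s$ has been absorbed into an infinite product, the remaining single-variable real Laplace integral in $R$ yields the stated asymptotic of $H_k$, and thereby Theorem~\ref{T:pk} via Ingham.
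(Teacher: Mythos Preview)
Your overall architecture---Ingham's Tauberian theorem applied to $\Gbar{k}(q)=H_k(q)/(q;q)_\infty$, combined with the modular asymptotic for $(q;q)_\infty$---matches the paper's proof. But there is a concrete error that propagates through the rest of the plan. The formula you quote as ``Ingham's theorem'' is the \emph{partial-sum} conclusion, not the individual-coefficient conclusion: from $f(e^{-t})\sim\lambda t^{\alpha}e^{A/t}$ one obtains $\sum_{m\le n}a(m)\sim \tfrac{\lambda}{2\sqrt{\pi}}A^{\alpha/2-1/4}n^{-\alpha/2-1/4}e^{2\sqrt{An}}$; passing from this to $a(n)$ under monotonicity costs an extra factor $\sqrt{A/n}$. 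With the correct version, the target is $\Gbar{k}(e^{-t})\sim \pi^{-1/2}\,t^{+1/2}\,e^{\beta/t}$ (not $t^{-3/2}$), and hence
\[
H_k(e^{-t})\;\sim\;\sqrt{2}\,\exp\!\left(\frac{\pi^2}{12k(k+1)\,t}\right),
\]
with \emph{no} power of $t$ in front. (The paper in fact applies Ingham to $(1-q)\Gbar{k}(q)$ and telescopes, which is equivalent.) Your claimed $\sqrt{2}\,t^{-2}$ target for $H_k$ is therefore wrong, and any saddle-point computation that produced it would be off by exactly this factor.

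The second gap is the method for $H_k$ itself. Your two-variable Laplace plan runs straight into the oscillation from $(-1)^s$, as you note; the proposed Euler-identity repair does not apply because the quadratic exponent in $s$ has coefficient $\tfrac{k(k+1)}{2}+\tfrac{k+1}{2}=\tfrac{(k+1)^2}{2}$ while the Pochhammer base is $q^{k+1}$, so the series is \emph{not} of the form $\sum (-z)^s q^{(k+1)\binom{s}{2}}/(q^{k+1};q^{k+1})_s$. The paper sidesteps this entirely via the constant-term method: one writes
\[
H_k(q)=\text{coeff}_{x^0}\Bigl(\theta\bigl(q^{k(k+1)/2};x\bigr)\cdot \frac{1}{(x;q^k)_\infty}\cdot (xq^{k+1};q^{k+1})_\infty\Bigr),
\]
turns the coefficient extraction into a one-dimensional contour integral, applies Poisson summation to the theta factor, and then does stationary phase in a \emph{single} variable. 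The critical point sits at $e^{2\pi i u}=\tfrac12$, the dilogarithm identity $\Li_2(\tfrac12)+\tfrac12(\log 2)^2=\tfrac{\pi^2}{12}$ gives the exponent $\tfrac{\pi^2}{12k(k+1)t}$, and the Gaussian contribution exactly cancels the $t^{-1/2}$ from Poisson summation, yielding the constant prefactor $\sqrt{2}$. This one-variable route is what makes the alternating sign harmless: it has already been absorbed into the product $(xq^{k+1};q^{k+1})_\infty$ by the $q$-binomial theorem \emph{before} any asymptotic expansion is attempted.
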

\begin{remark}
Interestingly, our techniques do not apply to the case of partitions without
sequences, despite the similarity of \eqref{E:Gkdouble} and Theorem
\ref{T:Gbarq}.
\end{remark}

We next focus particularly on certain special cases that are related to Ramanujan's
mock theta functions.   Andrews \cite{And05} showed
that the generating function for partitions without $2$-sequences may be
written as
\begin{equation}
G_2(q) = \frac{\left(-q^3; q^3\right)_\infty}{\left(q^2; q^2\right)_\infty} \cdot \chi(q),
\end{equation}
where
\begin{equation*}
\chi(q) := 1 + \sum_{n \geq 1} \frac{q^{n^2}}{\prod_{j = 1}^n \left(1 - q^j + q^{2j}\right)},
\end{equation*}
which is one of Ramanujan's third-order mock theta functions. Ramanujan
originally introduced the mock theta functions by listing a small
collection of examples in his last letter to Hardy \cite{Wat}. He justified
his own interest by observing their striking asymptotic properties and
near-symmetries under modular transformations.  Andrews \cite{And05} used
some of these properties in order to determine the asymptotic behavior of
$G_2(q)$ as $q \to 1$, which then implies \eqref{E:p2}.  The general theory
of mock theta functions has also recently seen great advancements, as
Zwegers' doctoral thesis \cite{Zw02} has led to a proper understanding of the
relationship to automorphic forms \cite{BO, BO2, Zag06}.

The case $k=1$ of lower $k$-run overpartitions is similarly related to
another of Ramanujan's third-order mock theta functions from \cite{Wat}.  In
particular, the mock theta function
\begin{equation}
\label{E:phi}
\phi(q) := 1 + \sum_{n \geq 1} \frac{q^{n^2}}{\left(-q^2; q^2\right)_n},
\end{equation}
appears in the following expression for the generating function of lower
$1$-run overpartitions.
\begin{theorem}
\label{P:Gbar1}
For $|q| < 1$,
\begin{equation*}
\Gbar{1}(q) = (q; q)_\infty \cdot \phi(q).
\end{equation*}
\end{theorem}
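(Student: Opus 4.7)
The plan is to derive Theorem~\ref{P:Gbar1} from the double-sum formula in Theorem~\ref{T:Gbarq} at $k=1$ by $q$-series manipulation. Setting $k=1$ in Theorem~\ref{T:Gbarq} gives
$$
\Gbar{1}(q) = \frac{1}{(q;q)_\infty}\sum_{r,s\ge 0}\frac{(-1)^s q^{(r+s)^2 + s(s+1)}}{(q;q)_r(q^2;q^2)_s}.
$$
The first step is to reindex by $n = r+s$, which isolates the factor $q^{n^2}$ characteristic of the defining series of $\phi$:
$$
\Gbar{1}(q) = \frac{1}{(q;q)_\infty}\sum_{n\ge 0}q^{n^2}\sum_{s=0}^n \frac{(-1)^s q^{s(s+1)}}{(q;q)_{n-s}(q^2;q^2)_s}.
$$

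Next, I would use the splitting $(q^2;q^2)_s = (q;q)_s(-q;q)_s$ together with the Gaussian binomial coefficient $\binom{n}{s}_q = (q;q)_n/((q;q)_s(q;q)_{n-s})$ to rewrite the inner sum as
$$
\frac{1}{(q;q)_n}\sum_{s=0}^n\binom{n}{s}_q\frac{(-1)^s q^{s(s+1)}}{(-q;q)_s}.
$$
Comparing with $\phi(q) = \sum_{n\ge 0}q^{n^2}/(-q^2;q^2)_n$, the goal reduces to showing that the outer series equals $(q;q)_\infty^2\phi(q)$. For this I would invoke Bailey's lemma: the sequence $\beta_n = 1/(-q^2;q^2)_n$ is the $\beta$-side of a classical Bailey pair relative to $a=1$ (appearing on Slater's list as the Bailey pair underlying the Rogers--Ramanujan-type representation of $\phi$), with a known companion $\alpha_n$. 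Matching the finite sum above with the appropriate scalar multiple of $\alpha_n$ and then applying Bailey's lemma
$$
\sum_{n\ge 0}q^{n^2}\beta_n = \frac{1}{(q;q)_\infty}\sum_{n\ge 0}q^{n^2}\alpha_n
$$
converts the double sum into the required product form, and dividing through by $(q;q)_\infty$ gives the claim.

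The main obstacle is the middle step, namely identifying the precise $q$-hypergeometric transformation that aligns the inner sum with the Bailey pair for $\phi$, including careful bookkeeping of signs and of the Pochhammer factors $(-q;q)_s$, $(q;q)_{n-s}$, and the Gaussian binomial. Should the Bailey-pair matching prove stubborn, an alternative is to pass through a Hecke-type double-sum representation of $\phi$ obtained from Jacobi's triple product and compare both sides of the desired identity term by term; this still requires a nontrivial transformation identity from the $q$-series literature but bypasses the delicate bookkeeping in the Bailey framework.
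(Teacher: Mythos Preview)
Your approach is genuinely different from the paper's. The paper's proof is entirely combinatorial: it applies the conjugation involution on Ferrers diagrams to put lower $1$-run overpartitions in bijection with overpartitions in which (except possibly for the largest part) every overlined part size must also occur non-overlined. The generating function for this family is then written down directly by summing over the largest part, and recognized as the expression that Fine's identity (equation (26.32) in \cite{Fine}) gives for $(q;q)_\infty\,\phi(q)$. No $q$-series transformation beyond quoting Fine is used. Your plan, by contrast, is purely analytic: specialize Theorem~\ref{T:Gbarq} at $k=1$, reindex by $n=r+s$, and then try to collapse the resulting double sum to $(q;q)_\infty^2\,\phi(q)$ via Bailey machinery.

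The problem is that your proposal is a sketch, not a proof, and the missing step is precisely the hard one. After the reindexing you arrive at
\[
\sum_{n\ge 0}\frac{q^{n^2}}{(q;q)_n}\sum_{s=0}^n\binom{n}{s}_q\frac{(-1)^s q^{s(s+1)}}{(-q;q)_s},
\]
and you want this to equal $(q;q)_\infty^2\,\phi(q)$. You then say you would ``invoke Bailey's lemma'' by matching the inner sum with the $\alpha$-side of the Bailey pair whose $\beta$-side is $\beta_n=1/(-q^2;q^2)_n$, but you neither identify that $\alpha_n$ nor verify that your inner sum equals it. In fact the standard Bailey relation $\sum_n q^{n^2}\beta_n=(q;q)_\infty^{-1}\sum_n q^{n^2}\alpha_n$ would give $(q;q)_\infty^2\phi(q)=(q;q)_\infty\sum_n q^{n^2}\alpha_n$, which has a global factor $(q;q)_\infty$ not visibly present in your expression, so a termwise match with $\alpha_n$ cannot work without further manipulation. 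You acknowledge this yourself (``the main obstacle is the middle step''), and your fallback via Hecke-type double sums is equally nonspecific. As it stands, the proposal reduces the theorem to an unproved $q$-series identity of comparable difficulty; to make this route viable you would need either an explicit closed-form evaluation of the inner sum (e.g.\ as a terminating ${}_2\phi_1$ via a named summation) or a precise citation for the required Bailey pair and the corresponding identity.
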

\begin{remark}
Note that the $k=1$ case of Theorem \ref{T:pk} states that
$\overline{p}_1(n) \sim \frac{\sqrt{5}}{4\sqrt{6}\,n} e^{\pi\sqrt{\frac{5n}{6}}}.$
\end{remark}

The remainder of the paper is structured as follows.  In Section
\ref{S:Overpartitions}, we  consider basic combinatorial properties of
$k$-run overpartitions and derive their generating series.  In Section
\ref{S:Constant}, we apply the Constant Term Method to determine the
asymptotic behavior of the generating series.  Finally, in Section
\ref{S:OverpartnAsymp}, we prove the asymptotic expression for $k$-run overpartitions found in Theorem \ref{T:pk}.

\section{Overpartition combinatorics and generating series}
\label{S:Overpartitions}

\subsection{Combinatorial results for $k$-run overpartitions}
\label{S:Combinatorial}

In this section we denote the number of lower $k$-run
overpartitions of $n$ by $\pbar{k}^{(-)}(n)$.  We also recall the definition
\df{upper $k$-run overpartitions}  as overpartitions in which
the overlined parts must occur in consecutive runs of the form
$\o{j+1}, \ldots, \overline{j+k}$, with no overlined parts
$\o{j}$, and no parts of any kind of size $j+k+1$. The number
of upper $k$-run overpartitions of size $n$ is denoted by
$\pbar{k}^{(+)}(n)$.

Our first observation on $k$-run overpartitions is that the
lower and upper definitions are in bijective correspondence.
\begin{proposition}
\label{P:op-+} For all $n\geq 0$ and $k \geq 1$,
$$\pbar{k}^{(-)}(n) = \pbar{k}^{(+)}(n).$$
\end{proposition}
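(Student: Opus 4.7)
The plan is to exhibit an explicit size-preserving bijection $\Phi$ between lower and upper $k$-run overpartitions that keeps the underlying multiset of parts fixed and only relocates the overlines. The idea is suggested by the example: for each partition shape, the ``lower'' and ``upper'' rules each allow at most one nontrivial overlining pattern on each maximal run of consecutive values, and the two patterns differ by a blockwise reflection.

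I would first decompose the distinct parts of any (over)partition into maximal \emph{blocks}: maximal intervals $[a,b]$ of consecutive integers such that $a, a+1, \dots, b$ all appear as parts while $a-1$ and $b+1$ do not. This decomposition depends only on the underlying partition.

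The main step, and the only place where the definitions of lower/upper are actually used, is the structural claim that the overlined values within a single block $[a,b]$ are severely restricted: in a lower $k$-run overpartition they are either empty or exactly the bottom-$k$ elements $\{a, a+1, \dots, a+k-1\}$ (so in particular $b-a+1 \geq k$), and in an upper $k$-run overpartition they are either empty or exactly the top-$k$ elements $\{b-k+1, \dots, b\}$. For the lower case, any overlined run $\o{j+1}, \dots, \o{j+k}$ lies inside some block, so $j+1 \geq a$ and $j+k \leq b$; the condition ``$j$ is absent from the multiset'' combined with the maximality of blocks forces $j=a-1$. A second overlined run inside the same block is impossible, because its parameter $j'$ would then lie in $[a,b]$ and hence be present in the multiset. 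The remaining constraint ``$\o{j+k+1}$ does not appear'' is automatic once we overline only the bottom $k$ elements. The upper case is symmetric, with $b+1$ playing the role of $a-1$.

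Granting this structural lemma, $\Phi$ is essentially forced: keep the underlying partition, and in every block that carries overlines replace the bottom-$k$ pattern by the top-$k$ pattern; the inverse swaps top for bottom. The conditions on different blocks are independent because any two blocks are separated by at least one missing integer, so the forbidden values $j$ (lower) or $j+k+1$ (upper) never reach into another block. Since $\Phi$ preserves the multiset, it preserves $n$, yielding $\pbar{k}^{(-)}(n) = \pbar{k}^{(+)}(n)$. I expect the only real obstacle to be the structural lemma above; boundary situations such as blocks of size exactly $k$ (where top-$k$ and bottom-$k$ coincide and $\Phi$ acts as the identity on that block) and the leftmost block (where $a-1=0$ is automatically absent) cause no trouble.
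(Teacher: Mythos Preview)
Your proposal is correct and follows essentially the same approach as the paper: decompose the underlying partition into maximal blocks of consecutive part-sizes, observe that in a lower (resp.\ upper) $k$-run overpartition each block carries at most one $k$-run of overlines located at its bottom (resp.\ top), and then shift the overlines from one end of the block to the other. You are somewhat more explicit than the paper in proving the structural lemma (uniqueness of the overlined run within a block) and in checking the edge cases, but the bijection is the same.
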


\begin{proof}
We construct a simple bijection between the two sets of
overpartitions.  In a lower $k$-run overpartition, any run of
$k$ overlined parts occurs at the lower end of some block of
consecutive parts, surrounded on both sides by gaps.  That is,
for some $j$ and some $\ell\geq j+k$, all the parts
$$\o{j+1},\ldots,\o{j+k},j+k+1,\ldots,\ell$$
appear (perhaps together with further non-overlined copies of
these same values), with gaps at $j$ and $\ell+1$.  We can form
a new overpartition by shifting the $k$ overlines to the upper
end of the block, i.e.,\ replacing the above parts with
$$j+1,\ldots,\ell-k,\o{\ell-k+1},\ldots,\o\ell.$$
Applying this transformation to every run of $k$ overlined parts
results in an upper $k$-run overpartition, and this map is
clearly bijective.
\end{proof}

We henceforth write simply $\pbar{k}$ for
$\pbar{k}^{(-)}$, and abbreviate the term ``lower $k$-run overpartition'' to
\df{$k$-run overpartition}.  It will be convenient to work with
both the upper and lower definitions in different contexts. In
view of Proposition~\ref{P:op-+} they are interchangeable for
purposes of enumeration.

We have the following monotonicity properties for $k$-run overpartitions.
\begin{proposition}
\label{P:mono}
For any $n \geq 0$ and $k \geq 1$,
\begin{enumerate}
\item
\label{P:mono:n}
$\pbar{k}(n)\leq \pbar{k}(n+1)$
\item
\label{P:mono:k}
$\pbar{k}(n) \geq \pbar{k+1}(n)$.
\end{enumerate}
\end{proposition}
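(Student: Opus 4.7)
The plan is to prove each of the two inequalities by constructing an explicit injection between sets of overpartitions.

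For (i), I will define an injection $\phi$ from $k$-run overpartitions of $n$ into $k$-run overpartitions of $n+1$ by adjoining a new non-overlined part one larger than the current maximum. Precisely, if $\pi$ has largest part $M$, I set $\phi(\pi) = \pi \cup \{M+1\}$ with $M+1$ not overlined, and $\phi(\emptyset) := \{1\}$ when $\pi$ is empty. The overlined structure of $\phi(\pi)$ agrees with that of $\pi$, so the only thing to check is that every overlined block $\o{j+1},\ldots,\o{j+k}$ of $\pi$ remains a valid $k$-run block in $\phi(\pi)$. The condition ``part $j$ absent'' is preserved because $j+k\leq M$ forces $j<M+1$, so the inserted part does not equal $j$; the condition ``$\o{j+k+1}$ absent'' is preserved because the new part carries no overline. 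Injectivity is immediate: in $\phi(\pi)$ the largest part is non-overlined with multiplicity one, and deleting it recovers $\pi$.

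For (ii), I will define an injection $\psi$ from $(k+1)$-run overpartitions of $n$ into $k$-run overpartitions of $n$ by unoverlining the top of each block. Given a $(k+1)$-run overpartition $\pi$, I replace each block $\o{j+1},\ldots,\o{j+k+1}$ by the truncated block $\o{j+1},\ldots,\o{j+k}$ together with a non-overlined copy of $j+k+1$; this modifies the overlining but not the underlying multiset of parts, so the size is preserved. The truncated block satisfies the $k$-run conditions in $\psi(\pi)$: ``$j$ absent'' is inherited from $\pi$, and ``$\o{j+k+1}$ absent'' holds because the unique overlined copy of $j+k+1$ has just been converted. Injectivity follows from a unique recovery rule: each $k$-run block of $\psi(\pi)$ picks out the value $j+k+1$ on which an overline must be restored to reconstruct the corresponding $(k+1)$-run block of $\pi$.

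The point requiring the most care is the verification that the negatively-stated block conditions (that certain specified parts do not appear) survive the modification. In (i) the subtlety is ruling out that $M+1$ coincides with a forbidden value $j$ from some overlined block, which follows from the extremality of $M$. In (ii) the subtlety is that removing an overline must not merge neighboring blocks or spoil a block further up the partition; this is automatic because distinct overlined blocks in $\pi$ are separated by a genuine gap at $j$ (where $j$ is absent), so the truncated blocks remain separated in $\psi(\pi)$, and because the overpartition convention freely allows additional non-overlined copies of a value already appearing with an overline.
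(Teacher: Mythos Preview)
Your proof is correct, and part \ref{P:mono:k} is essentially identical to the paper's argument: both remove the overline from the top part $\o{j+k+1}$ of each $(k+1)$-block in a lower $(k+1)$-run overpartition.

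For part \ref{P:mono:n} you take a slightly different (dual) route. The paper switches to the \emph{upper} $k$-run model and simply appends a non-overlined part $1$; this works because in the upper model the constraint at the bottom of a block is only ``no $\o{j}$,'' so a non-overlined $1$ never interferes. You instead stay in the \emph{lower} model and append a non-overlined part $M+1$ above the current maximum; this works for the dual reason that in the lower model the constraint at the top of a block is only ``no $\o{j+k+1}$,'' so a non-overlined $M+1$ never interferes. Both injections are equally valid and equally elementary. The paper's map has the minor aesthetic advantage that the added part is always the same value, while yours has the advantage of not invoking the upper/lower bijection.
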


\begin{proof}
\ref{P:mono:n}  Given an upper $k$-run overpartition of $n$, adding a
non-overlined part $1$ clearly gives an upper $k$-run
overpartition of $n+1$, and this map is injective.

\ref{P:mono:k}  We construct an injection from lower $(k+1)$-run
overpartitions to lower $k$-run overpartitions.  If $\lambda$
is a $(k+1)$-run overpartition and $\overline{j+1}, \dots,
\overline{j+k+1}$ is a $(k+1)$-run, then there are no parts of
size $j$, and there can only be non-overlined parts of size
$j+k+2$.  Removing the overline on $\o{j+k+1}$ results in a
$k$-run satisfying the lower $k$-run condition, and applying
this to every run gives the required injection.
\end{proof}
\begin{remarks}\
\begin{enumerate}
\item We note that the partitions without sequences studied in \cite{HLR}
    satisfy the opposite inequality to that in Proposition \ref{P:mono}
    \ref{P:mono:k}:
\[
p_k(n)\leq p_{k+1}(n).
\]
\item For any fixed $n$, we have $\pbar{k}(n) = p_k(n) =
    p(n)$ for sufficiently large $k$.
\end{enumerate}
\end{remarks}

\subsection{A double summation series}
In this section we consider the two-variable generating
functions
\begin{equation}
\Gbar{k}(x) = \Gbar{k}(x; q)
:= \sum_{\ell, n \geq 0} \pbar{k}(\ell,n) x^\ell q^n,
\end{equation}
where $\pbar{k}(\ell,n)$ is defined to be the number of $k$-run
overpartitions of size $n$ and exactly $\ell$ parts.  The main
result in this section is a double series expansion for this
generating function, which is proven from a $q$-difference
equation for $\Gbar{k}(x; q)$.

\begin{theorem}
\label{T:Gbarxq}
For $|q| < 1$,
\begin{equation*}
\Gbar{k}(x;q) = \frac{1}{(xq;q)_\infty} \sum_{r, s \geq 0}
\frac{(-1)^s x^{kr + (k+1)s}
q^{\textstyle\frac{k(k+1)(r+s)^2}{2} + \frac{(k+1)s(s+1)}{2}}}
{(q^{k};q^{k})_r (q^{k+1}; q^{k+1})_s}.
\end{equation*}
\end{theorem}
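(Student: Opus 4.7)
The plan is to work with upper $k$-run overpartitions (which by Proposition \ref{P:op-+} are equinumerous with the lower ones), and to derive a $q$-difference equation for the auxiliary function $H_k(x;q) := (xq;q)_\infty \Gbar{k}(x;q)$ that can then be iterated to yield the double series. Parametrizing an upper $k$-run overpartition by its blocks $\overline{j_i+1},\ldots,\overline{j_i+k}$ at positions $0 \leq j_1 < \cdots < j_N$ with gaps $j_{i+1} - j_i \geq k+1$, together with a non-overlined partition avoiding the values $\{j_i+k+1\}_{i=1}^N$, one obtains
\begin{equation*}
H_k(x;q) = \sum_{N \geq 0} x^{kN} q^{N\binom{k+1}{2}} \sum_{\mathbf{j}} q^{k\sum_i j_i} \prod_{i=1}^N (1 - xq^{j_i+k+1}).
\end{equation*}

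Splitting the sum over $\mathbf{j}$ according to whether $j_1 = 0$ then yields a $q$-difference equation. The configurations with $j_1 \geq 1$ give $H_k(xq;q)$ via the shift $j_i \mapsto j_i - 1$, while configurations with $j_1 = 0$, after factoring out the lowest block and shifting the remaining blocks down by $k+1$, give $x^k q^{\binom{k+1}{2}}(1-xq^{k+1}) H_k(xq^{k+1};q)$. Hence
\begin{equation*}
H_k(x;q) = H_k(xq;q) + x^k q^{\binom{k+1}{2}}(1-xq^{k+1}) H_k(xq^{k+1};q).
\end{equation*}
Telescoping in $q$-shifts and using $H_k(0;q) = 1$ rewrites this as the fixed-point relation $H_k(x;q) = 1 + \sum_{m \geq 0} x^k q^{km+\binom{k+1}{2}}(1-xq^{m+k+1}) H_k(xq^{m+k+1};q)$.

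Iterating this fixed-point relation $N$ times produces a nested sum $F_N(x;q)$, with $F_0 = 1$, and passing to the limit gives $H_k(x;q) = \sum_{N \geq 0} F_N$. Expanding each binomial factor $(1-xq^{\cdot})$ in $F_N$ and evaluating the geometric series in the shift parameters, the terms reorganize pairwise via the telescoping identity $\frac{1}{1-q^a} + \frac{q^b}{1-q^b} = \frac{1-q^{a+b}}{(1-q^a)(1-q^b)}$ into precisely the sum of target double-series terms with $r+s = N$, where $r$ and $s$ count respectively the ``$1$'' and ``$-xq^{\cdot}$'' selections in the binomial expansions.

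The main obstacle is verifying this last collapse of the nested sums. A clean way is to reduce, after expanding $\prod_i (1 - xq^{j_i+k+1})$ via inclusion-exclusion with $T \subseteq [N]$, $s = |T|$, $r = N-s$, to the closed-form identity
\begin{equation*}
\sum_{\mathbf{j},\, T\subseteq [N],\, |T|=s} q^{k\sum_i j_i + \sum_{i\in T} j_i} = \frac{q^{k(k+1)\binom{N}{2} + (k+1)\binom{s}{2}}}{(q^k;q^k)_{r}(q^{k+1};q^{k+1})_s},
\end{equation*}
which can be proved by induction on $N$ by conditioning on whether $1 \in T$ (i.e., whether the smallest block is ``marked''); the inductive step reduces to the elementary algebraic identity $(1-q^{kr}) q^{(k+1)s} + (1 - q^{(k+1)s}) = 1 - q^{kN+s}$.
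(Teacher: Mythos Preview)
Your proof is correct, and the key steps check out: the block-parametrization formula for $H_k(x;q)=(xq;q)_\infty\Gbar{k}(x;q)$ is valid, the $q$-difference equation you derive is exactly the paper's equation for $\overline{L}_k$, and the identity
\[
\sum_{\substack{0\le j_1,\ j_{i+1}-j_i\ge k+1\\ T\subseteq[N],\ |T|=s}} q^{\,k\sum_i j_i+\sum_{i\in T}j_i}
=\frac{q^{\,k(k+1)\binom{N}{2}+(k+1)\binom{s}{2}}}{(q^k;q^k)_{N-s}(q^{k+1};q^{k+1})_s}
\]
does follow by the induction you indicate (conditioning on $1\in T$ and shifting $j_i\mapsto j_i-j_1-(k+1)$ for $i\ge 2$ reduces to $(1-q^{kr})q^{(k+1)s}+(1-q^{(k+1)s})=1-q^{kN+s}$). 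Plugging this identity into the block sum and simplifying the $q$-exponent gives exactly the claimed double series.

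Your route is, however, genuinely different from the paper's. Both arguments arrive at the same $q$-difference equation for $H_k=\overline{L}_k$, but the paper then \emph{verifies} that the proposed double series satisfies the resulting coefficient recurrence $(1-q^m)\overline\lambda_m=q^{m(k+1)-\binom{k+1}{2}}(\overline\lambda_{m-k}-\overline\lambda_{m-k-1})$ together with the first $k+1$ initial values. You instead \emph{construct} the double series: you write $H_k$ explicitly as a sum over block configurations, expand the product $\prod_i(1-xq^{j_i+k+1})$ by inclusion--exclusion, and evaluate the resulting $(\mathbf{j},T)$-sum in closed form. The paper's approach is shorter once the answer is known; yours explains where the two Pochhammer denominators come from (the unmarked and marked blocks contribute geometric sums in $q^k$ and $q^{k+1}$ respectively) and does not require guessing the formula in advance. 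One expository remark: the middle paragraph about iterating the fixed-point relation and the pairwise telescoping identity is not needed once you take the direct route of your final paragraph; you could streamline by going from the block-sum formula straight to the inclusion--exclusion computation.
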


\begin{proof}
The generating function $\overline{G}_k$ satisfies the $q$-difference
equation
\begin{equation}
\label{E:Gbardiff}
\Gbar{k}(x) = \frac{1}{1-xq} \: \Gbar{k}(xq) +
\frac{x^{k}q^{\frac{k(k+1)}{2}}}{(xq;q)_{k}} \: \Gbar{k}\bigl(xq^{k+1}\bigr),
\end{equation}
which follows from the definition of (upper) $k$-run overpartitions
by separating according to their smallest parts.  The
first term corresponds to the overpartitions without an
overlined $\overline{1}$, in which case there is no restriction
on subsequent parts; the second term corresponds to
overpartitions that do have the part $\overline{1}$, and thus
also have each of the parts $\overline{2}, \dots,
\overline{k}$ (and possibly non-overlined parts of these
sizes as well), followed by no parts of size $k+1$.

This recurrence is easier to solve after ``re-normalizing'' the
equation, which is achieved by setting
\begin{equation}
\label{E:Lk}
\overline{L}_k(x) = \overline{L}_k(x;q) := (xq;q)_\infty \Gbar{k}(x;q).
\end{equation}
Equation \eqref{E:Gbardiff} then implies that this normalized
function satisfies the $q$-difference equation
\begin{equation}
\label{E:Lbardiff}
\overline{L}_k(x) - \overline{L}_k(xq)
= x^{k}q^{\frac{k(k+1)}{2}}\bigl(1-xq^{k+1}\bigr) \overline{L}_k\bigl(xq^{k+1}\bigr).
\end{equation}
We next write this as a series in $x$, where the coefficients
are polynomials in $q$,  $\overline{L}_k(x;q) =:
\sum_{m \geq 0} \overline{\lambda}_m(q) x^m.$ Equation
\eqref{E:Lbardiff} is equivalent to the statement that for all
$m$,
\begin{equation}
\label{E:calF}
\left(1-q^m\right) \: \overline{\lambda}_m = q^{m(k+1) - \frac{k(k+1)}{2}}\left(\overline{\lambda}_{m-k}-\overline{\lambda}_{m-k-1}\right).
\end{equation}

By \eqref{E:Lk}, the theorem statement is equivalent to verifying that $\overline{L}_k(x;q)$ is equal to the double series
\begin{equation*}
\sum_{r, s \geq 0} \frac{(-1)^s x^{kr + (k+1)s}q^{\textstyle\frac{k(k+1)(r+s)^2}{2}
+ \frac{(k+1)s(s+1)}{2}}}{(q^{k};q^{k})_r (q^{k+1}; q^{k+1})_s}.
\end{equation*}
It is sufficient to show that this double series satisfies the $q$-difference
equation \eqref{E:Lbardiff}, while also checking that the $k+1$ initial
$x$-powers are compatible. Denote the $x^m$ coefficient in the double series
by $\widetilde{\lambda}_m$; it is clear by definition that
$\widetilde{\lambda}_0 = 1$ and that $\widetilde{\lambda}_1 = \dots =
\widetilde{\lambda}_{k-1} = 0$, and finally
$$
\widetilde{\lambda}_k = \frac{x^kq^{\frac{k(k+1)}{2}}}{1 - q^k}.
$$
Note that $k$-run overpartitions with at most $k-1$ parts can contain
only non-overlined parts, and thus
\begin{equation*}
\Gbar{k}(x;q) = \frac{1}{(xq;q)_\infty} + \frac{x^k q^{\frac{k(k+1)}{2}}}{1-q^k} + O\bigl(x^{k+1}\bigr).
\end{equation*}
The first term gives all overpartitions with no overlined parts, and the
second term gives all overpartitions that consist of a single $k$-run of
overlined parts. Thus
\begin{equation*}
\overline{L}_k(x;q) = 1 + \frac{x^k q^{\frac{k(k+1)}{2}}}{1-q^k} + O\bigr(x^{k+1}\bigl),
\end{equation*}
and the initial terms $\lambda_i$ $(0 \leq i \leq k)$ agree with
those given for $\widetilde{\lambda}_i$ above.

Now we verify that the  $\widetilde{\lambda}_i$ satisfy
\eqref{E:calF}.  Temporarily writing
$$
A_{r,s}=A_{r,s}(q):=\frac{(-1)^s q^{\textstyle\frac{k(k+1)(r+s)^2}{2}
+ \frac{(k+1)s(s+1)}{2}}}{\left(q^k;q^{k}\right)_r \left(q^{k+1}; q^{k+1}\right)_s},
$$
we then have $\widetilde{\lambda}_m = \sum_{kr + (k+1)s = m} A_{r,s},$
and thus
\begin{align*}
\left(1-q^m\right) \widetilde{\lambda}_m &= \sum_{\substack{r, s \geq 0: \\ kr + (k+1)s = m}}
A_{r,s}\cdot \left[1 - q^{(k+1)s} + q^{(k+1)s}\left(1-q^{kr}\right)\right]
\\
&= \sum_{\substack{r, s \geq 0: \\ kr + (k+1)s = m - k - 1}} A_{r,s} \cdot
q^{{\textstyle\frac{k(k+1)(2r + 2s + 1)}{2}}+ (k+1)(s+1)} \\
&\qquad+ \sum_{\substack{r, s \geq 0: \\ kr + (k+1)s = m - k}}
A_{r,s} \cdot q^{{\textstyle \frac{k(k+1)(2r + 2s + 1)}{2}}
+(k+1)s}
\\
&= q^{m(k+1) - \frac{k(k+1)}{2}}
 \left(\widetilde{\lambda}_{m-k} - \widetilde{\lambda}_{m-k-1}\right).
\end{align*}
The two sums in the second line were shifted by $s \mapsto s+1$ and $r \mapsto r+1$, respectively.
This completes the proof of the double series formula.
\end{proof}

\subsection{Relation to Ramanujan's mock theta functions}
\label{S:k=2} In this section we prove Theorem~\ref{P:Gbar1}. Recall the
earlier definition of Ramanujan's third-order mock theta function
\begin{equation*}
\phi(q) := \sum_{n \geq 0} \frac{q^{n^2}}{\left(-q^2; q^2\right)_n}.
\end{equation*}
This was proven by Fine (see equation (26.32) of \cite{Fine}) to have the
equivalent form
\begin{equation}
\label{E:phiFine}
\phi(q) = \frac{1}{(q;q)_\infty}
\biggl(1 + 2 \sum_{n \geq 1} \frac{q^n}{1-q^n}\prod_{j = 1}^{n-1} \frac{1 + q^{2j}}{1-q^j}\biggr).
\end{equation}

\begin{proof}[Proof of Theorem~\ref{P:Gbar1}]
We first describe a family of overpartitions that are in bijective
correspondence with lower $1$-run overpartitions. The bijection will be the
{\it conjugation} involution. An overpartition may be represented by a {\it
Ferrer's diagram} that lists each part as a left-justified row of dots, with
the parts listed in decreasing order from top to bottom. Furthermore, since
an overlined part is the last occurrence of any part size, we distinguish it
by marking the corresponding bottom-right ``corner''.  The conjugation map
then simply interchanges the rows and columns of such a diagram.

Under overpartition conjugation, lower $1$-run overpartitions map to
overpartitions in which overlined parts may not occur alone, except for
possibly in the largest part.  In other words, such overpartitions have the
property that if $\overline{m} \in \lambda$ and $m$ is not the largest part
size, then $m \in \lambda$ as well.  See Figure \ref{F:Conj} for an example
of the conjugate of a $1$-run overpartition.

\begin{figure}[here]
\begin{center}
\includegraphics[width = 300 pt]{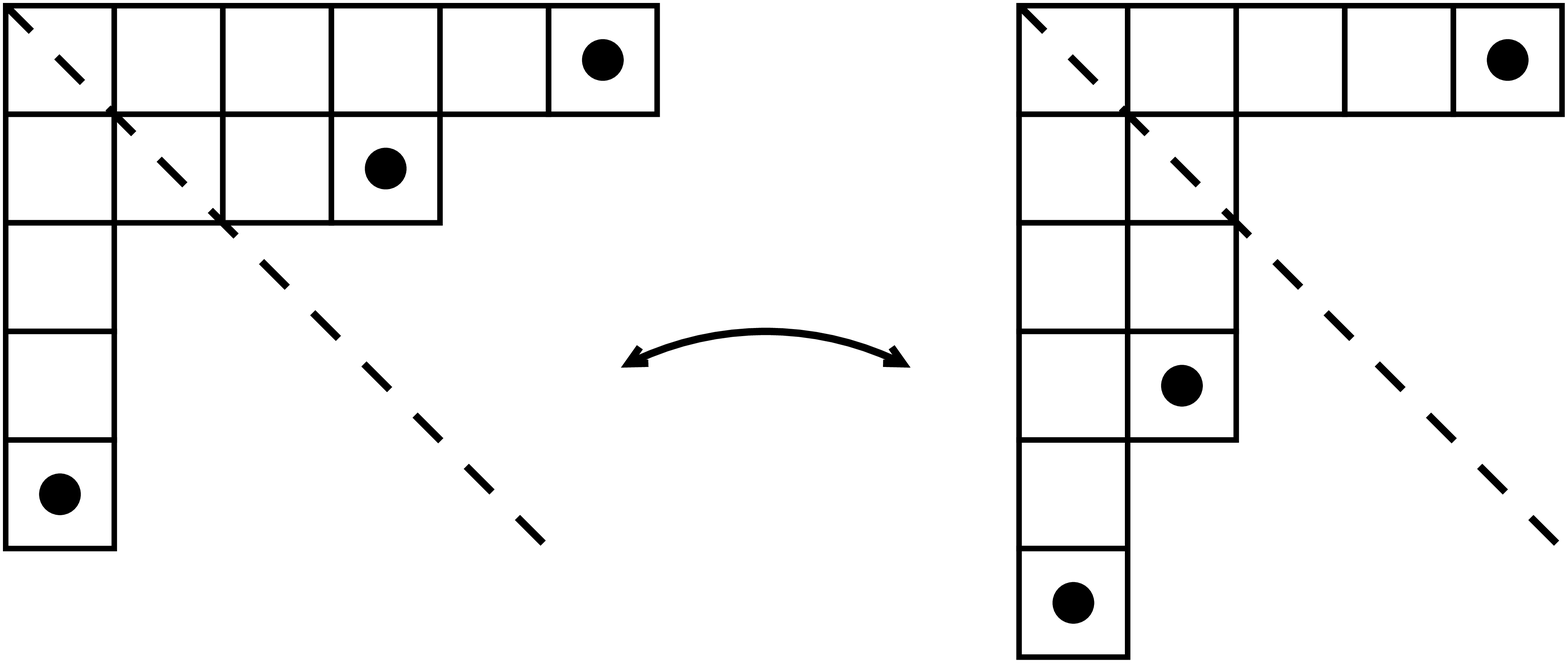}
\end{center}
\caption{\label{F:Conj}
The overpartition on the left, $\overline{6} + \overline{4} + 1 + 1 + \overline{1}$, is a lower $1$-run overpartition.  On the right is its conjugate, $\overline{5} + 2 + 2 + \overline{2} + 1 + \overline{1}$, which has the property that (with the exception of the largest part), no overlined part size occurs alone.}
\end{figure}

By distinguishing the largest part, the generating function for such partitions is clearly
\begin{equation*}
\Gbar{1}(q) = 1 + 2 \sum_{n \geq 1} \left(q^n + q^{2n} + \cdots \right) \prod_{j = 1}^{n-1}
 \left(1 + q^j + 2q^{2j} + 2q^{3j} + \cdots \right),
\end{equation*}
which is easily seen to be equal to the second factor in \eqref{E:phiFine}.
\end{proof}

\section{Generating series and the Constant Term Method}
\label{S:Constant}

Throughout this section we view $q$-series as Fourier expansions.  In fact, we will mainly focus on real values of $q \in (0,1)$, so we write $q := e^{- \varepsilon}$ with $\varepsilon \in \R^+.$  Consider the double hypergeometric $q$-series from Theorem \ref{T:Gbarq}, which we denote as
\begin{align}
\label{E:Hkq}
H_k(q):=
\sum_{r, s\geq 0}\frac{(-1)^s q^{\textstyle \frac{k(k+1)(r+s)^2}{2}+\frac{(k+1)s(s+1)}{2}}}
{\big(q^k; q^k\big)_r\big(q^{k+1}; q^{k+1}\big)_s}
 = (q; q)_\infty \overline{G}_k(q). \notag
\end{align}
\begin{proposition}
\label{P:Hkq}
As $\varepsilon \to 0^+$,
\begin{align*}
H_k(q) =
\sqrt{2}\, e^{\textstyle\frac{\pi^2}{12k(k+1)\varepsilon}}
\left(1 + O\left(\varepsilon^{\frac12}\right)\right).
\end{align*}
\end{proposition}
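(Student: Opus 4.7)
The plan is to apply the Constant Term Method: recast $H_k(q)$ as a contour integral in an auxiliary variable $z$, and extract its $\varepsilon\to 0^+$ asymptotics by combining Jacobi's imaginary transformation of the resulting theta series with the dilogarithm expansion of the $q$-Pochhammer factors, finishing by Laplace's saddle-point method.

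The first step recasts the double sum via the two Euler $q$-series identities
\[
\sum_{r\geq 0}\frac{z^r}{(q^k;q^k)_r}=\frac{1}{(z;q^k)_\infty},\qquad
\sum_{s\geq 0}\frac{(-z)^s q^{(k+1)s(s+1)/2}}{(q^{k+1};q^{k+1})_s}=(zq^{k+1};q^{k+1})_\infty.
\]
Multiplying these produces $F(z;q):=(zq^{k+1};q^{k+1})_\infty/(z;q^k)_\infty$ as the generating function for the inner sum, and grouping by $m=r+s$ gives $H_k(q)=\sum_{m\geq 0}q^{k(k+1)m^2/2}\,[z^m]F(z;q)$. Since $F$ is analytic at $z=0$, extending the summation to $m\in\Z$ is harmless, producing the constant-term representation
\[
H_k(q)=\frac{1}{2\pi i}\oint\frac{dz}{z}\,\Theta_k(z)\,F(z;q),\qquad \Theta_k(z):=\sum_{m\in\Z}z^{-m}q^{k(k+1)m^2/2},
\]
with the contour a small circle around the origin.

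On the contour $z=e^{-2\pi u}$, Jacobi's imaginary transformation gives
\[
\Theta_k(e^{-2\pi u})\sim \sqrt{\frac{2\pi}{k(k+1)\varepsilon}}\,\exp\!\left(\frac{2\pi^2 u^2}{k(k+1)\varepsilon}\right),
\]
the remaining lattice terms being exponentially smaller, while the classical dilogarithm expansion of Pochhammer symbols yields $\log F(z;q)=\Li_2(z)/(k(k+1)\varepsilon)-\log(1-z)+O(\varepsilon)$, uniformly for $z$ bounded away from $1$. The leading exponent on the contour is therefore $S(u)/\varepsilon$ with $S(u):=\frac{1}{k(k+1)}(2\pi^2 u^2+\Li_2(e^{-2\pi u}))$. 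Using $\frac{d}{du}\Li_2(e^{-2\pi u})=2\pi\log(1-e^{-2\pi u})$, the critical equation $S'(u)=0$ reduces to $e^{-2\pi u}=\tfrac12$, so the unique saddle sits at $z_0=\tfrac12$, and the classical value $\Li_2(\tfrac12)=\frac{\pi^2}{12}-\frac{\log^2 2}{2}$ gives $S(u_0)=\frac{\pi^2}{12k(k+1)}$, matching the claimed exponent. Inserting $S''(u_0)=\frac{8\pi^2}{k(k+1)}$ and the boundary value $1/(1-z_0)=2$ into Laplace's formula and multiplying by the theta prefactor, the $\varepsilon$-dependent factors telescope to leave exactly the constant $\sqrt{2}$.

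The main obstacle is obtaining uniform control of all these approximations along a single deformed contour through the saddle. The dilogarithm expansion of $F$ degenerates near $z=1$, and the Jacobi transformation of $\Theta_k$ requires $u$ in a bounded window, so one must verify that a contour of the form $|z|=\tfrac12$ (which passes through the saddle) lies in the common domain of validity, and that the accumulated errors from the Pochhammer expansion, the truncated dual theta series, and the subleading Laplace correction combine into the claimed relative error $O(\varepsilon^{1/2})$ rather than something weaker.
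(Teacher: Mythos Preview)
Your proposal is correct and follows essentially the same route as the paper's proof: both write $H_k(q)$ as the constant term of a theta function times the ratio of two $q$-Pochhammer products, apply Jacobi's transformation to the theta, expand the Pochhammer factors via the dilogarithm, locate the saddle at $z_0=\tfrac12$ (equivalently $e^{2\pi i u}=\tfrac12$), and evaluate by Laplace's method to obtain $\sqrt{2}\,e^{\pi^2/(12k(k+1)\varepsilon)}$. The only cosmetic difference is your real parametrization $z=e^{-2\pi u}$ versus the paper's $x=e^{2\pi i u}$ with the contour shifted to height $c=\tfrac{\log 2}{2\pi}$; the paper also absorbs the full dual theta lattice into an integral over $\mathbb{R}$ rather than isolating the dominant $n=0$ term, but this is equivalent.
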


\begin{proof}
The proof follows the {\it constant-term} method, which begins with the observation that $H_k$ can be written as a (Laurent) coefficient of a two-variable series, namely
\begin{equation}\label{E:Hkqcoeff}
H_k(q) =\mbox{coeff }{\left[x^0\right]}\left(\
\sum_{n\in\Z}q^{\frac{k(k+1)n^2}{2}}x^{-n}
\sum_{r\geq 0}\frac{x^r}{\big(q^k; q^k\big)_r}
\sum_{s\geq 0}\frac{(-1)^s x^s q^{\frac{(k+1)s(s+1)}{2}}}{\big(q^{k+1}; q^{k+1}\big)_s}
\right).
\end{equation}
As indicated, the series on the right side of \eqref{E:Hkqcoeff} factorizes into three sums (in $n$, $r$, and $s$), and we now express each of the three factors in terms of known number-theoretic
functions. Note that all of the summations are convergent for $|x|,|q|<1$.
The first factor is $\theta\left(q^{\frac{k(k+1)}{2}};x\right)$, where
\begin{equation*}
\theta(q;x) := \sum_{n \in \Z} q^{n^2}x^n
\end{equation*}
is the {\it Jacobi theta function}.

The second and third summations can both be expressed as infinite products
via the following instances of the $q$-binomial theorem \cite{And}:
\begin{align*}
\frac{1}{(x; q)_\infty} &= \sum_{m\geq 0}\frac{x^m}{(q; q)_m},\\
(x; q)_\infty &= \sum_{m\geq 0}\frac{(-1)^m q^{\frac{m(m-1)}{2}}x^m}{(q; q)_m}.
\end{align*}
We further rewrite these terms using the {\it quantum dilogarithm}; this function is defined by
\begin{equation*}
\Li_2(x; q) := -\log (x; q)_\infty = \sum_{m \ge 1} \frac{x^m}{m(1-q^m)}.
\end{equation*}

The expression in \eqref{E:Hkqcoeff} can now be written as
\[
H_k(q) = \text{coeff }\left[x^0\right]\Bigg( \theta\left(q^{\frac{k(k+1)}2};x\right)\,
\exp\bigg(\Li_2\left(x;q^k\right)-\Li_2\left(x q^{k+1}; q^{k+1}\right)\bigg)\Bigg).
\]
Applying Cauchy's theorem, we recover the $x^0$-coefficient as
\begin{align*}
H_k(q)= \int_{[0, 1]+ic} \theta\left(q^{\frac{k(k+1)}2};e^{2 \pi i u}\right)\,
 \exp\bigg(\Li_2\left(e^{2 \pi i u};q^k\right)-
 \Li_2\left(e^{2 \pi i u} q^{k+1}; q^{k+1}\right)\bigg) du,
\end{align*}
where $c > 0$ is a real constant that will be specified later.  The theta function may be transformed
using the standard Poisson summation formula \cite{Ko}, resulting in
\begin{equation*}
\theta\left(e^{ -\frac{k(k+1)\varepsilon}{2}};e^{2 \pi i u} \right)
 = \sum_{n\in\Z}e^{ {\textstyle\frac{-k(k+1)n^2\varepsilon }{2} }- 2\pi i n u}
 =\sqrt{\frac{2\pi}{\varepsilon k(k+1)}}\sum_{n\in\Z}e^{-\textstyle\frac{2\pi^2 (n+u)^2}{\varepsilon k(k+1)}} .
\end{equation*}
Our integral then transforms to
\begin{align}
H_k(q) & = \sqrt{\frac{2\pi}{\varepsilon k(k+1)}} \int_{[0, 1]+ic} \sum_{n\in\Z}
\exp\left(-\frac{2\pi^2 (n+u)^2}{\varepsilon k(k+1)} +
\Li_2\left(e^{2 \pi i u};q^k\right)-\Li_2\left(e^{2 \pi i u} q^{k+1}; q^{k+1}\right)\right) du \notag \\
\label{E:Hkqint}
& =\sqrt{\frac{2\pi^2}{\varepsilon k(k+1)}}
\int_{\R+ic} \exp\left( -\frac{2\pi u^2}{\varepsilon k(k+1)}
+\Li_2\left(e^{2\pi iu}; e^{-k\varepsilon}\right)
-\Li_2\left(e^{2\pi iu-(k+1)\varepsilon}; e^{-(k+1)\varepsilon}\right)\right) du.
\end{align}

We now apply the {\it stationary phase} method to this integral, which is
generally useful for determining the asymptotic behavior of integrals of the
form $\int_\R g(u) e^{\frac{i f(u)}{\varepsilon}} du$ as $\varepsilon \to 0^+$.  If
$f$ has a critical point at $u_0$, and both $f$ and $g$ have Taylor
expansions around this point, then the dominant asymptotic term of the
integral can be described in terms of $f(u_0)$ and $g(u_0)$.

In order to apply the stationary phase method to \eqref{E:Hkqint}, we must
first rewrite the integrand in this shape in order to identify the dominant
asymptotic terms. If $|x|<1$ and $B\geq 0,$ then the Laurent expansion of the
quantum dilogarithm begins
\begin{align}
\label{E:Li2expansion}
\Li_2\left(e^{-B\varepsilon} x; e^{-\varepsilon}\right)
&=\sum_{n\geq 1}\frac{x^n e^{-Bn\varepsilon}}{n\left(1-e^{-\varepsilon n}\right)}
=\frac{1}{\varepsilon}\sum_{n\geq 1}\frac{x^n}{n^2}\left(1-n\varepsilon\left(B-\tfrac12\right)+
O\left(\varepsilon^2\right)\right)\\
&=\frac{1}{\varepsilon} \Li_2(x)+\left(B-\tfrac12\right)\log(1-x)+O(\varepsilon), \notag
\end{align}
uniformly in $x$ as $\varepsilon \to 0$.  Here $\Li_2(x) := \sum_{n \geq 1}
\frac{x^n}{n^2}$ is the standard dilogarithm function. Using \eqref{E:Li2expansion},
we study the argument of the exponential in the integrand of
\eqref{E:Hkqint}, namely
\begin{equation}
\label{E:exp}
-\frac{2\pi u^2}{\varepsilon k(k+1)}
+\Li_2\left(e^{2\pi iu}; e^{-k\varepsilon}\right)
-\Li_2\left(e^{2\pi iu-(k+1)\varepsilon}; e^{-(k+1)\varepsilon}\right).
\end{equation}
In particular, we consider the Laurent expansion of \eqref{E:exp} and denote the coefficient of the (leading) $\varepsilon^{-1}$ term by
\begin{equation*}
f(u):=-\frac{2\pi^2 u^2}{k(k+1)}-\frac{\Li_2\big(e^{2\pi iu}\big)}{(k+1)}+\frac{\Li_2\big(e^{2\pi iu}\big)}{k}
= -\frac{2\pi^2 u^2}{k(k+1)} + \frac{\Li_2\big(e^{2\pi iu}\big)}{k(k+1)}.
\end{equation*}

We proceed by determining the critical point(s) of $f$.  Its derivative is
\[
f'(u)
=-\frac{4\pi^2u}{k(k+1)}+\frac{2\pi i\log\big(1-e^{2\pi iu}\big)}{k(k+1)}
=\frac{2\pi i}{k(k+1)}\Big(2\pi iu-\log\Big(1-e^{2\pi iu}\Big)\Big),
\]
and the critical point therefore occurs at
\[
e^{2\pi iu}=1-e^{2\pi iu}, \qquad\text{i.e.,}\qquad u=\frac{i\log
2}{2\pi}.
\]
We denote this critical value of $u$ by $w :=\frac{i\log 2}{2\pi}$, which also determines the most appropriate height of the contour (specifically, $c = \frac{\log{2}}{2\pi}$). At the
critical value, the function $f$ evaluates to
\begin{equation*}
f(w)=\frac{(\log 2)^2}{2k(k+1)}+\frac{1}{k(k+1)}\Li_2\left(\tfrac12\right)=\frac{\pi^2}{12k(k+1)}.
\end{equation*}
Here we used the functional equation \cite{Zag07}
\begin{equation*}
\Li_2(x)+\Li_2(1-x)=\frac{\pi^2}{6}-\log(x)\log(1-x).
\end{equation*}
By definition $f'(w) = 0$, but the second derivative  still makes a
contribution to the overall asymptotic behavior, so we calculate
\begin{equation*}
f''(w)=\frac{2\pi i}{k(k+1)}\left(2\pi i+\frac{2\pi ie^{2\pi iu}}{1-e^{2\pi iu}}\right)=-\frac{8\pi^2}{k(k+1)}<0.
\end{equation*}
We can therefore compute the first terms in the Taylor expansion around $w$
of \eqref{E:exp}.  In particular, using the change of variable
$u=w+\sqrt{\varepsilon}z$, this Taylor expansion is
\begin{equation}
\label{E:fw}
\frac{f(w)}{\varepsilon} + \left(\frac{f''(w)}{2}z^2-
\log\left(1-e^{2\pi iw}\right)\right) + O\left(\varepsilon^{\frac12}\right).
\end{equation}

Plugging in the expansions \eqref{E:Li2expansion} and \eqref{E:fw} to the integral \eqref{E:Hkqint}, the contour may then be shifted to the real axis, leading finally to the evaluation
\begin{align*}
H_k(q)
&=\sqrt{\frac{2\pi}{\varepsilon k(k+1)}}
\,\frac{e^{\frac{f(w)}{\varepsilon}}}{1-e^{2\pi iw}}
\left(1+O\left(\varepsilon^{\frac12}\right)\right)
\int_\R e^{\frac{f''(w)x^2}{2\varepsilon}}dx\\
&=\sqrt{\frac{2\pi}{\varepsilon k(k+1)}}\,
e^{\frac{\pi^2}{12k(k+1)\varepsilon}}\cdot 2\cdot
\sqrt{\frac{2\varepsilon}{-f''(w)}}\sqrt{\pi}\,\left(1+O\left(\varepsilon^{\frac12}\right)\right)\\
&=\sqrt{2}e^{\frac{\pi^2}{12k(k+1)\varepsilon}}\,\left(1+O\left(\varepsilon^{\frac12}\right)\right).
\qedhere
\end{align*}
\end{proof}

\section{Asymptotic behavior of $\pbar{k}(n)$}
\label{S:OverpartnAsymp}
\noindent
In this section we use the asymptotic behavior of generating series in order to determine the asymptotic behavior of $\pbar{k}(n)$ as $n \to \infty$.  As before, we write $q=e^{-\varepsilon}$. Recall that
\begin{equation}\label{recall}
\Gbar{k}(q)=\sum_{n=0}^\infty \pbar{k}(n)q^n=
(q;q)_{\infty}^{-1} \cdot H_k(q).
\end{equation}
Ingham's Tauberian theorem relates the asymptotic behavior of such a series to its coefficients. The following result is a special case of Theorem 1 in \cite{Ing41}.
\begin{theorem}[Ingham]\label{T:Ingham}
Let $f(z)=\sum_{n \geq 0}a(n) z^n$ be a power series with real nonnegative
coefficients and radius of convergence equal to $1$. If there exist $A>0$,
$\lambda, \alpha\in\R$ such that
\[
f(z)\sim
\lambda\left(-\log z\right)^\alpha
\exp\left(\frac{A}{-\log z}\right)
\]
as $z\to 1^-$, then
\[
\sum_{m=0}^n a(m)\sim\frac{\lambda}{2\sqrt{\pi}}\,
\frac{A^{\frac{\alpha}{2}-\frac14}}{n^{\frac{\alpha}{2}+\frac14}}\,
\exp\left(2\sqrt{An}\right)
\]
as $n\to\infty$.
\end{theorem}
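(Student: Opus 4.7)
The plan is to view this as a Tauberian problem for the Laplace--Stieltjes transform of the partial-sum function $S(n):=\sum_{m=0}^{n}a(m)$. Writing $z=e^{-\varepsilon}$, the hypothesis becomes
\[
\sum_{n\geq 0}a(n)\,e^{-n\varepsilon}\sim \lambda\,\varepsilon^{\alpha}\,\exp\!\left(\frac{A}{\varepsilon}\right)\qquad(\varepsilon\to 0^+),
\]
and since $a(n)\geq 0$, the left-hand side equals $\int_{0^-}^{\infty}e^{-\varepsilon t}\,dS(t)$ with $S$ nondecreasing. The goal is then to invert this Abelian statement to obtain the claimed asymptotics of $S(n)$.

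The first step is the easy upper bound using monotonicity: for every $\varepsilon>0$,
\[
S(n)\,e^{-n\varepsilon}\leq \sum_{m\geq 0}a(m)e^{-m\varepsilon}\leq (1+o(1))\,\lambda\,\varepsilon^{\alpha}\,e^{A/\varepsilon}.
\]
Optimizing the right-hand side in $\varepsilon$ gives the saddle value $\varepsilon_{\ast}=\sqrt{A/n}$, which already produces the correct exponential rate $e^{2\sqrt{An}}$ and the correct $\varepsilon^{\alpha}=(A/n)^{\alpha/2}$ factor, but it misses the Gaussian constant $\frac{1}{2\sqrt{\pi}}$ and an extra $n^{-1/4}$. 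A symmetric lower bound, of the correct exponential order but losing constants, can be obtained by a similar monotonicity argument applied on a short window $[n,n+h]$ with $h\asymp n^{1/4}$.

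To pin down the constant, the main idea is a \emph{saddle-point matching}: since $a(m)\geq 0$, the measure $e^{-m\varepsilon_{\ast}}a(m)$ is concentrated near its ``centre of mass'' and should behave like a Gaussian of width $\asymp \varepsilon_{\ast}^{-3/2}A^{-1/2}\asymp n^{3/4}A^{-1/4}$ once the exponential factor $e^{n\varepsilon+A/\varepsilon}$ is expanded to second order about $\varepsilon_{\ast}$. I would make this rigorous by a Karamata-type argument: approximate the indicator $\mathbf{1}_{[0,n]}(m)$ by a sequence of polynomials (in $e^{-\varepsilon m}$) sandwiching it from above and below, evaluate each against the generating series using the given asymptotic, and show both bounds converge to $\frac{\lambda}{2\sqrt{\pi}}A^{\alpha/2-1/4}n^{-\alpha/2-1/4}e^{2\sqrt{An}}$. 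Equivalently, one may apply Wiener's Tauberian theorem to the multiplicative convolution obtained after the change of variables $\varepsilon=\sqrt{A/t}$, which transforms the exponential factor $e^{A/\varepsilon}$ to a tractable form amenable to Wiener's theorem with non-vanishing Fourier transform; this is precisely the device used in Ingham's original 1941 paper.

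The main obstacle is the combination of the rapid growth $e^{A/\varepsilon}$ with the Tauberian inversion: ordinary Karamata theory covers only regularly varying $f(z)$, and the rapid-growth regime forces a genuine saddle-point analysis. The delicate point is showing that, up to $o(1)$ errors, the entire contribution to $\sum_m a(m)e^{-m\varepsilon_{\ast}}$ comes from $m\in[n-C n^{3/4},\,n+C n^{3/4}]$; this is the step where the Gaussian width, and hence the constant $\frac{1}{2\sqrt{\pi}}$ and the exponent $-1/4$, emerge. Once this concentration is established, the final asymptotic formula for $S(n)$ follows by comparing the predicted Abelian integral $\int e^{-\varepsilon_{\ast}t}dS(t)$ against the hypothesized $\lambda\varepsilon_{\ast}^{\alpha}e^{A/\varepsilon_{\ast}}$.
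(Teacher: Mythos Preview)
The paper does not prove this statement. It is quoted as a black box, attributed to Ingham with the remark ``The following result is a special case of Theorem~1 in \cite{Ing41},'' and then immediately applied to deduce Theorem~\ref{T:pk}. So there is no proof in the paper to compare against.

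That said, your sketch correctly identifies the architecture of Ingham's original argument: the substitution $z=e^{-\varepsilon}$, the interpretation as a Laplace--Stieltjes transform of the nondecreasing partial-sum function, the cheap upper bound from monotonicity plus the saddle choice $\varepsilon_\ast=\sqrt{A/n}$, and the recognition that the extra $n^{-1/4}$ and the constant $\tfrac{1}{2\sqrt{\pi}}$ come from a Gaussian integration at the saddle. You are also right that ordinary Karamata does not apply in the rapid-growth regime $e^{A/\varepsilon}$ and that one must invoke a Wiener-type Tauberian device, as Ingham does.

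However, as written this is a plan rather than a proof. The crucial concentration step---showing that for nonnegative $a(m)$ the mass of $\sum_m a(m)e^{-m\varepsilon_\ast}$ is, up to $o(1)$, supported on $|m-n|\ll n^{3/4}$---is asserted but not established; this is exactly the hard Tauberian step, and it does not follow from the saddle-point heuristics alone. Your ``lower bound on a short window $[n,n+h]$'' is also only gestured at. If you intend to supply a self-contained proof rather than cite Ingham, you would need to carry out the Wiener-kernel argument (or an equivalent two-sided approximation) in full; otherwise, citing \cite{Ing41} as the paper does is the appropriate course.
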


\begin{proof}[Proof of Theorem \ref{T:pk}]
The modular inversion formula for Dedekind's eta-function (page 121,
Proposition 14 of \cite{Ko}) states that
\begin{equation*}
(q;q)_{\infty} = \sqrt{\frac{2 \pi}{\varepsilon}} \: e^{-\frac{\varepsilon}{24} - \frac{\pi^2}{6 \varepsilon}}
\prod_{n \geq 1} \Bigl(1 - e^{-\frac{4 \pi^2 n}{\varepsilon}}\Bigr).
\end{equation*}
This implies that $\varepsilon \rightarrow 0^+,$
\begin{equation}\label{E:qinfty}
(q;q)_{\infty}  \sim \sqrt{\frac{2 \pi}{\varepsilon}} \: e^{- \frac{\pi^2}{6\varepsilon}}.
\end{equation}
Combined with Proposition \ref{P:Hkq}, this implies that as $ \varepsilon\to 0^+$
\begin{equation*}
(1-q) \Gbar{k}(q) \sim \frac{\sqrt{\varepsilon}}{\sqrt{\pi}}
\,\exp\left(\frac{\pi^2}{6 \varepsilon}\left(1 + \frac{1}{2k(k+1)}\right)\right).
\end{equation*}
Note that the coefficients of this $q$-series are $(1-q) \Gbar{k}(q) = \sum_{n \geq 0} \big(\pbar{k}(n) - \pbar{k}(n-1)\big)q^n$.  Applying Theorem \ref{T:Ingham} with $a(n):= \pbar{k}(n)-\pbar{k}(n-1)$ then gives the stated asymptotic formula for $\pbar{k}(n)$
by telescoping.
\end{proof}

\end{document}